\newlength{\circlabelwidth}
    \setlist{nosep,before={\parskip=0pt plus 2pt},after={\parskip=0.5em plus 2pt}}
    \setlist[itemize]{labelindent=10pt,labelwidth=\circlabelwidth,leftmargin=!,label=$\circ$}
\newcommand{\R}{\mathbb{R}}
\newcommand{\conv}{\operatorname{conv}}
\newtheorem{theorem}{Theorem}
\newtheorem{lemma}[theorem]{Lemma}
\newtheorem{proposition}[theorem]{Proposition}
\theoremstyle{remark}
\newtheorem*{remark}{Remark}
\theoremstyle{definition}
\DeclareMathSymbol{\lsb@l}{\mathalpha}{letters}{`l}
\date{January 2025}
\begin{document}

\title{Piercing intersecting convex sets}

\author[1,2]{Imre B\'ar\'any}
\ead{barany.imre@renyi.hu}

\author[3]{Travis Dillon}
\ead{travis.dillon@mit.edu}

\author[1,4]{Dömötör Pálvölgyi}
\ead{domotor.palvolgyi@ttk.elte.hu}

\author[1]{Dániel Varga}
\ead{daniel@renyi.hu}

\affiliation[1]{
    organization={HUN-REN Alfréd Rényi Institute of Mathematics}, 
    addressline={13 Reáltanoda Street}, 
    city={Budapest}, 
    postcode={1053}, 
    country={Hungary}
}
\affiliation[2]{
    organization={Department of Mathematics, University College London}, 
    addressline={Gower Street}, 
    city={London}, 
    postcode={WC1E 6BT}, 
    country={UK}
}
\affiliation[3]{
    organization={Massachusetts Institute of Technology}, 
    addressline={77 Massachusetts Avenue}, 
    city={Cambridge}, 
    state={MA}, 
    country={USA}
}
\affiliation[4]{
    organization={ELTE Eötvös Loránd University}, 
    city={Budapest}, 
    country={Hungary}
}

\begin{keyword}
Helly-type theorems \sep line transversals \sep linear programming
\MSC[2020] 52A15 \sep 52A35 \sep 90C05
\end{keyword}

\begin{abstract}
Assume two finite families $\mathcal A$ and $\mathcal B$ of convex sets in $\mathbb{R}^3$ have the property that $A\cap B\ne \emptyset$ for every $A \in \mathcal A$ and $B\in \mathcal B$. Is there a constant $\gamma >0$ (independent of $\mathcal A$ and $\mathcal B$) such that there is a line intersecting $\gamma|\mathcal A|$ sets in $\mathcal A$ or $\gamma|\mathcal B|$ sets in $\mathcal B$? This is an intriguing Helly-type question from a paper by Mart\'{i}nez, Roldan and Rubin. We confirm this in the special case when all sets in $\mathcal A$ lie in parallel planes and all sets in $\mathcal B$ lie in parallel planes; in fact, all sets from one of the two families has a line transversal.
\end{abstract}

\maketitle

\section{Introduction and main results}\label{sec:intro}

In a paper on extensions of the Colorful Helly Theorem, Mart\'{i}nez, Roldan and Rubin~\cite{MRR} ask the following: Suppose two finite families $\mathcal A$ and $\mathcal B$ of convex sets in $\R^3$ have the property that $A\cap B\ne \emptyset$ whenever $A \in \mathcal A$ and $B\in \mathcal B$. Is it true then that there is a line intersecting a fixed positive fraction of the sets in $\mathcal A$ or in $\mathcal B$? The best we know is that it is true in some special cases. For instance, B\'ar\'any \cite{bar} confirmed that when all sets in $\mathcal A$ and $\mathcal B$ are cylinders, or if all sets have a bounded aspect ratio, then it is true. In general, however, this question is wide open.

A different special case of this question, namely the case of \emph{vertical polygons}, was raised independently by Andreas Holmsen and G\'eza T\'oth (personal communication). A vertical polygon in $\R^3$ is a convex polygon that lies in a plane orthogonal to the $xy$-plane. Is there a real number $\gamma >0$ such that, whenever both $\mathcal A$ and $\mathcal B$ consist of vertical polygons, there is a line intersecting a $\gamma$-fraction of the sets in $\mathcal A$ or a $\gamma$-fraction of those in $\mathcal B$? One motivation for studying this question is that the aspect ratio of a vertical polygon is infinite, so it is as far as possible from the case of convex bodies with bounded aspect ratio; perhaps a solution in this case could shed light on the general problem. Even this special case, however, remains open.

Our main result, Theorem~\ref{th:Helly} below, states there is a line intersecting \emph{all} sets of $\mathcal A$ or of $\mathcal B$, provided the vertical polygons in $\mathcal A$ lie in parallel planes and the vertical polygons in $\mathcal B$ also lie in parallel planes. We also prove, under the same condition, that we can restrict the location of the piercing line: Either there is a line in the plane of some $A\in \mathcal A$ intersecting $\frac{1}{6} |\mathcal B|$ sets in $\mathcal B$, or there is a line in the plane of some $B\in \mathcal B$ intersecting $\frac{1}{6} |\mathcal A|$ sets in $\mathcal A$. This is Theorem~\ref{th:fracH}, which is stated and proved in Section~\ref{sec:frac}. 

We conclude the paper in Section~\ref{sec:dim} with a partial extension of our results to higher dimensions.

\section{Theorem~\ref{th:Helly} and its proof}\label{sec:main-thm}

We start with two families, $\mathcal A$ and $\mathcal B$, consisting of vertical polygons such that all the polygons in $\mathcal A$ are in parallel planes, all the polygons in $\mathcal B$ are in parallel planes, and $A \cap B \neq \emptyset$ whenever $A \in \mathcal A$ and $B \in \mathcal B$. For each pair $(A,B)$, take an arbitrary point $P(A,B)$ in $A \cap B$. If we replace $A$ by $\conv\{P(A,B): B \in \mathcal B\}$ and replace $B$ by $\conv\{P(A,B): A \in \mathcal A\}$, this new collection of vertical polygons is still parallel and pairwise intersecting. If one of these new families has a line transversal, then the corresponding original family does, as well. Moreover, the question is invariant under non-degenerate affine transformations, so we can assume that the planes containing sets in $\mathcal A$ are parallel with the $yz$-plane, and the planes containing sets in $\mathcal B$ are parallel with the $xz$-plane.

Based on these reductions, we formulate our problem in a more convenient notation.
Let $x \in \R^n, y \in \R^m$ and $Z \in \R^{n \times m}$, and for each $i\in [n],j \in [m]$, let $P_{ij} := (x_i, y_j, z_{ij}) \in \R^3$. We form convex sets $A_i := \conv(\{P_{ij} : j \in [m] \} )$ and $B_j := \conv(\{P_{ij} : i \in [n] \} )$. By construction, every $A_i$ is contained in a plane parallel to the $yz$-plane, every $B_j$ is contained in a plane parallel to the $xz$-plane, and every $A_i$ intersects every $B_j$. (In this setup, the point $P_{ij}$ corresponds to the point $P(A_i, B_j)$ in the previous paragraph.)  This setup is illustrated in Figure \ref{fig:3x3}.

\begin{figure}[b]
    \centering
    \begin{tikzpicture}[x={(-10:1cm)}, y={(180+30:0.8cm)}, z={(0,1cm)}]
        \foreach \x in {1,2,3}
            \draw[gray] (\x,1) -- (\x,3) (1,\x) -- (3,\x);
        \foreach \x/\y/\z in {1/1/0.4, 2/1/1.5, 3/1/1, 1/2/0.5, 2/2/0.5, 3/2/0.1, 1/3/0, 2/3/0.7, 3/3/0}{
            \coordinate (\x\y) at (\x,\y,\z) {};
            \draw[dotted] (\x,\y,0) -- (\x,\y,\z);
        }
        \foreach \a in {1,2,3}{
            \fill[opacity=0.2,red] (\a1) -- (\a2) -- (\a3) -- cycle;
            \fill[opacity=0.2,blue] (1\a) -- (2\a) -- (3\a) -- cycle;
        }
        \draw[->] (0,0,0) -- (0,0,1.75) node[above]{$z$};
        \draw[->] (0,0,0) -- (0,3.5,0) node[below left]{$y$};
        \draw[->] (0,0,0) -- (3.5,0,0) node[right]{$x$};
    \end{tikzpicture}
    \caption{An illustration of the $n=m=3$ case. The red sets form the family $\mathcal A$ and the blue sets form the family $\mathcal B$.}
    \label{fig:3x3}
\end{figure}
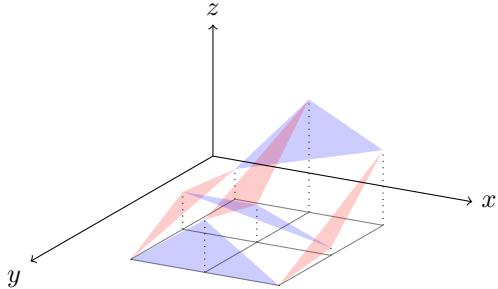

\begin{theorem}\label{th:Helly}
With the setup of the previous paragraph, either
\begin{itemize}
    \item there is a real number $x_0 \in \R$ and a line $l_x$ in the plane $(x_0, \,\cdot\,, \,\cdot\,)$ intersecting all sets $B_j$, or
    \item there is a real number $y_0 \in \R$ and a line $l_y$ in the plane $(\,\cdot\,, y_0, \,\cdot\,)$ intersecting all sets $A_i$.
\end{itemize} 
\end{theorem}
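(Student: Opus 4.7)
I would phrase each alternative as a linear programming feasibility problem. The first conclusion --- a line in some plane $X = x_0$ meeting every $B_j$ --- is equivalent to (P1): there exist $\lambda_{ij} \ge 0$ and $x_0, a, b \in \R$ with $\sum_i \lambda_{ij} = 1$, $\sum_i \lambda_{ij} x_i = x_0$, and $\sum_i \lambda_{ij} z_{ij} = a y_j + b$ for every $j$. (Meeting $B_j$ at the point $(x_0, y_j, a y_j + b)$ is equivalent to $(x_0, a y_j + b)$ lying in the polygon $C_j := \conv\{(x_i, z_{ij}) : i \in [n]\}$ of the $xz$-plane.) The symmetric LP (P2), with weights $\mu_{ij}$, unknowns $y_0, c, d$, and polygons $D_i := \conv\{(y_j, z_{ij}) : j \in [m]\}$, handles the second alternative. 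The goal is to show that at least one of P1 and P2 is feasible.

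Next I would apply Farkas' lemma to each LP. P1 is infeasible iff there exist scalars $(\alpha_j, \beta_j, \gamma_j)_{j\in [m]}$ satisfying the moment conditions $\sum_j \beta_j = \sum_j \gamma_j = \sum_j \gamma_j y_j = 0$, the sign conditions $\alpha_j + \beta_j x_i + \gamma_j z_{ij} \ge 0$ for every $(i,j)$, and $\sum_j \alpha_j < 0$. Setting $\phi_j(x, z) := \beta_j x + \gamma_j z + \alpha_j$, we get $C_j \subset \{\phi_j \ge 0\}$ and --- because of the moment relations --- the striking identity $\sum_{j=1}^m \phi_j(x, z) \equiv \sum_j \alpha_j < 0$ holding on all of $\R^2$. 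Infeasibility of P2 yields analogous $\psi_i(y, z) = \beta'_i y + \gamma'_i z + \alpha'_i$ with $D_i \subset \{\psi_i \ge 0\}$ and $\sum_i \psi_i \equiv \sum_i \alpha'_i < 0$. Supposing for contradiction that both P1 and P2 are infeasible, I would combine the two certificates using the pairwise intersection points $P_{ij} = (x_i, y_j, z_{ij}) \in A_i \cap B_j$ guaranteed by the reduction above: since both $\phi_j(x_i, z_{ij})$ and $\psi_i(y_j, z_{ij})$ are nonnegative, the sum $\sum_{i,j} w_{ij}\bigl(\phi_j(x_i, z_{ij}) + \psi_i(y_j, z_{ij})\bigr)$ is nonnegative for any choice of $w_{ij} \ge 0$, and the aim is to pick $w$ so that, after the moment identities collapse the cross terms, this sum reduces to $c_1 \sum_j \alpha_j + c_2 \sum_i \alpha'_i$ for positive constants $c_1, c_2$ --- a negative number, the desired contradiction.

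The main obstacle is producing such weights $w_{ij}$. A uniform choice $w_{ij} = 1/(nm)$ fails to eliminate terms like $\sum_j \gamma_j \bigl(\sum_i w_{ij} z_{ij}\bigr)$, while demanding that $w$ collapse every cross term (both the P1-type column constraints and the P2-type row constraints) is precisely the combined feasibility of P1 and P2 --- exactly what our hypothesis rules out. So the actual argument needs more than a direct averaging. One promising route is to first reduce the configuration using Helly's theorem in $\R^3$: since the feasible region of P1 is the intersection of $m$ convex sets in $(x_0, a, b)$-space, $4$-wise feasibility already implies full feasibility, and analogously for P2, so any counterexample can be taken to have $n, m \le 4$, and a careful case analysis --- together with the moment identities above --- should close the argument on this bounded-size configuration.
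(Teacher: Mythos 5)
Your setup is exactly the paper's: both alternatives become LP feasibility problems, Farkas' lemma turns infeasibility of each into a certificate of affine functions $\phi_j$ (nonnegative on $C_j$, with $\sum_j\beta_j=\sum_j\gamma_j=\sum_j\gamma_jy_j=0$ and $\sum_j\alpha_j<0$) and $\psi_i$ (the symmetric conditions), and the task is to show the two certificates cannot coexist. But the finishing move is missing, and the obstacle you identify is real: no single nonnegative weight matrix $w_{ij}$ applied to the sums $\phi_j(x_i,z_{ij})+\psi_i(y_j,z_{ij})$ will collapse the bilinear terms. The paper's resolution is to abandon that ansatz entirely and use the dual coefficients of $z$ themselves as \emph{signed} multipliers, applied to the two families of inequalities \emph{separately}: multiply $\phi_j(x_i,z_{ij})\ge 0$ by $\gamma'_i$ (your $v$-certificate's $z$-coefficient) and $\psi_i(y_j,z_{ij})\ge 0$ by $\gamma_j$. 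Splitting indices by sign, $I^\pm=\{i:\gamma'_i\gtrless 0\}$ and $J^\pm=\{j:\gamma_j\gtrless 0\}$, one gets $\gamma'_i\phi_j(x_i,z_{ij})-\gamma_j\psi_i(y_j,z_{ij})\ge 0$ for $(i,j)\in I^+\times J^-$ and the reversed combination for $I^-\times J^+$; in each case the term $\gamma'_i\gamma_j z_{ij}$ cancels identically. Summing over $I^+\times J^-$ and $I^-\times J^+$, the moment conditions (including the bilinear ones $\sum_j\gamma_jy_j=0$ and $\sum_i\gamma'_ix_i=0$, which you should note are constraints you already have from the Farkas duals of the slope variables $a$ and $c$) kill the linear parts, and the constant parts reduce to an expression that is strictly negative by $\sum_j\alpha_j<0$ and $\sum_i\alpha'_i<0$ (after handling the degenerate case $\gamma'\equiv 0$ or $\gamma\equiv 0$ separately). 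That is the contradiction. The essential idea you are missing is that the weighting is built out of the certificate itself and changes sign, with the sign pattern chosen so that the flipped inequalities pair up against unflipped ones.

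Your fallback route is not a proof either. The Helly reduction to a $4\times 4$ subconfiguration is sound (infeasibility of P1 is witnessed by four of the convex sets $K_j\subset\R^3$ of admissible $(x_0,a,b)$, and shrinking the $B_j$ to the subgrid preserves infeasibility), but the resulting $4\times 4$ problem still has a $24$-parameter continuous configuration space, and "a careful case analysis should close the argument" is a hope, not an argument. For comparison, the paper's Lemma~3 handles even the $3\times 3$ case only via a genuinely geometric argument exploiting parallelism (the two cross-lines of the spatial quadrangle $Q$ meet in a point, plus a one-dimensional Borsuk--Ulam step), and that lemma yields only the fractional Theorem~2, not Theorem~1. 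So as written the proposal contains the correct dual framework but not the key cancellation that completes it.
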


Because the problem is affine-invariant, this proves the theorem for any two collections of vertical polygons that live in parallel planes. 

\begin{proof}
We start by writing up the intended conclusions as a system of linear inequalities which has a solution if and only if a piercing line exists.

Let us do this for the first possibility. Let the line $l_x$ be parametrised as $\{(x_0, y, ay+z_0): y \in \R \}$. The claim that $l_x$ pierces $B_j$ is equivalent to the existence of barycentric coordinates $(\beta_{ij} : i \in [n])$ such that $\sum_i \beta_{ij} P_{ij} = (x_0, y_j, ay_j+z_0)$. 

Our system of linear inequalities specifying a piercing line $(x_0, y, ay+z_0)$ is then

\begin{align}
    \sum_i \beta_{ij} x_i &= x_0 & \forall j \in [m], \tag{x-piercing}\label{primal:x}\\
    \sum_i \beta_{ij} z_{ij} &= ay_j + z_0 & \forall j \in [m], \tag{z-piercing}\label{primal:z}\\
    \sum_i \beta_{ij} &= 1 & \forall j \in [m], \tag{barycentric}\label{primal:barycentric}\\
    \beta_{ij} &\geq 0 & \forall i\in [n],j \in [m]. \tag{nonnegative}\label{primal:nonnegative}
\end{align}

We now apply Farkas' Lemma to write up a system that is unsolvable in dual variables $U \in \mathbb{R}^{3 \times m}$ if and only if the above system is solvable in $(\beta, a, x_0, z_0)$.

\begin{align}
u_{1j} x_i + u_{2j} z_{ij} + u_{3j} &\geq 0 & \forall i\in [n],j \in [m], \tag{$\beta$}\label{dual:beta}\\
\sum_j u_{2j} y_j &= 0, \tag{$a$}\label{dual:a}\\
\sum_j u_{1j} &= 0, \tag{$x_0$}\label{dual:x}\\
\sum_j u_{2j} &= 0, \tag{$z_0$}\label{dual:z}\\
\sum_j u_{3j} &< 0. \tag{infeasible}\label{dual:infeasible}
\end{align}

Below we describe the correspondence between the parts of the two systems of inequalities. The real vectors $u_{1j}, u_{2j}, u_{3j}$ correspond to equalities (\ref{primal:x}), (\ref{primal:z}), (\ref{primal:barycentric}), respectively. The $n \times m$ inequalities of (\ref{dual:beta}) correspond to the $n \times m$ non-negative variables of $\beta$. The equalities (\ref{dual:a}), (\ref{dual:x}), and (\ref{dual:z}) correspond to the real variables $a, x_0, z_0$, respectively.

Our next step is to combine the above dual system with the analogous system written up for the $l_y$ line. The combined system has $x, y, Z, U, V$ as variables, and is bilinear, non-semidefinite. This is in contrast with its two constituents that are linear when $x, y, Z$ are treated as parameters.

The complete system we get is:\vspace{\baselineskip}

\noindent
$\exists x, y \in \mathbb{R}^{n},\ \exists Z \in \mathbb{R}^{n \times m},\ \exists U\in \mathbb{R}^{3 \times m}, V \in \mathbb{R}^{3 \times n}:$
\begin{align}
u_{1j} x_i + u_{2j} z_{ij} + u_{3j} &\geq 0 & \forall i\in [n],j \in [m], \tag{x:$\beta$}\label{combinedx:beta}\\
v_{1i} y_j + v_{2i} z_{ij} + v_{3i} &\geq 0 & \forall i\in [n],j \in [m], \tag{y:$\beta$}\label{combinedy:beta}\\
\sum_j u_{2j} y_j &= 0, \tag{x:$a$}\label{combinedx:a}\\
\sum_i v_{2i} x_i &= 0, \tag{y:$a$}\label{combinedy:a}\\
\sum_j u_{1j} = \sum_i v_{1i} = \sum_j u_{2j} = \sum_i v_{2i} &= 0, \tag{x:$x_0$, y:$x_0$, x:$z_0$, y:$z_0$}\label{combinedxy:xz}\\
\sum_j u_{3j} &< 0, \tag{x:infeasible}\label{combinedx:infeasible}\\
\sum_i v_{3i} &< 0. \tag{y:infeasible}\label{combinedy:infeasible}
\end{align}

Having the combined system of inequalities at hand, we now finish the proof. We demonstrate that this system is unsolvable rather directly, by writing up weighted sums of our inequalities until a contradiction is reached. This implies that one of the original systems must be solvable; in other words, either $\mathcal A$ or $\mathcal B$ has a piercing line.

\medskip
    \textbf{Remark.} The argument could be formulated in the framework of Farkas' lemma: if we treat $U, V$ as parameters and $x, y, Z$ as variables, we can give a closed-form solution to the dual of the resulting linear system of inequalities. In an interesting contrast with this, if we treat $x, y, Z$ as parameters and $U, V$ as variables, the dual formulation is equivalent to presenting a piercing line, and we are not aware of any simple formula for a dual solution.
    \medskip

Let us introduce the three variables
\[
    x'_i:=v_{2i}x_i, \qquad
    y'_j:=u_{2j}y_j, \qquad
    z'_{ij}:=u_{2j}v_{2i} z_{ij}
\]
and the four sets
\[\begin{array}{c @{\hspace{2em}} c}
    I^+=\{i : v_{2i}\ge 0\} & J^+=\{j : u_{2j}\ge 0\} \\
    I^-=\{i : v_{2i}< 0\} & J^-=\{j : u_{2j}< 0\}\rlap{.}
\end{array}\]
By multiplying (\ref{combinedx:beta}) by $v_{2i}$ we obtain
\begin{align}
    \forall i\in I^+, \ \forall j&: u_{1j} x_i' + z_{ij}' + v_{2i}u_{3j} \geq 0,\label{eqxp}\\
    \forall i\in I^-, \ \forall j&: u_{1j} x_i' + z_{ij}' + v_{2i}u_{3j} \le 0.\label{eqxn}
\end{align}
Similarly, by multiplying (\ref{combinedy:beta}) by $u_{2j}$ we obtain
\begin{align}
    \forall j\in J^+, \ \forall i &: v_{1i} y_j' + z_{ij}' + u_{2j}v_{3i} \geq 0,\label{eqyp}\\
    \forall j\in J^-, \ \forall i&: v_{1i} y_j' + z_{ij}' + u_{2j}v_{3i} \leq 0.\label{eqyn}
\end{align}

By comparing (\ref{eqxp}) and (\ref{eqyn}), we obtain
\begin{align}
    \forall i\in I^+, \ \forall j\in J^-&: u_{1j} x_i' - v_{1i} y_j' + v_{2i}u_{3j} -u_{2j}v_{3i} \geq 0.\label{eqc1}
\end{align}
Similarly, from (\ref{eqxn}) and (\ref{eqyp}), we obtain
\begin{align}
    \forall i\in I^-, \ \forall j\in J^+&: -u_{1j} x_i' + v_{1i} y_j' - v_{2i}u_{3j} +u_{2j}v_{3i} \geq 0.\label{eqc2}
\end{align}

We now sum the last two inequalities over each of their ranges and add them together; this will give us a contradiction. To find it, we look separately at each of the four parts of this sum, one for each summand in the inequalities \eqref{eqc1} and \eqref{eqc2}.

The first part is
\[
    \sum_{i\in I^+}\sum_{j\in J^-} u_{1j} x_i' - \sum_{i\in I^-}\sum_{j\in J^+} u_{1j} x_i',
\]
which we deal with by introducing the shorthand
\[\begin{array}{c @{\hspace{2em}} c}
    u_1^+=\sum_{j\in J^+} u_{1j} & x^+=\sum_{j\in J^+} x_{j}' \\
    u_1^-=\sum_{j\in J^-} u_{1j} & x^-=\sum_{j\in J^-} x_{j}'\rlap{.}
\end{array}\]
From (\ref{dual:x}), we have $u_1^+=-u_1^-$, and from (\ref{combinedy:a}), we have $x^+=-x^-$.
Thus, the first part of the sum is
$u_1^-x_1^+-u_1^+x_1^-=u_1^-x_1^+-u_1^-x_1^+=0$.

The second part of the sum also vanishes; the proof is the same, using the variables
\[\begin{array}{c @{\hspace{2em}} c}
    v_1^+=\sum_{i\in I^+} v_{1i} & y^+=\sum_{j\in J^+} y_{j}' \\
    v_1^-=\sum_{i\in I^-} v_{1i} & y^-=\sum_{j\in J^-} y_{j}'\rlap{.}
\end{array}\]

To deal with the third part of the sum, we introduce
\[\begin{array}{c @{\hspace{2em}} c}
    v_2^+=\sum_{i\in I^+} v_{2i} & u_3^+=\sum_{j\in J^+} u_{3j} \\
    v_2^-=\sum_{i\in I^-} v_{2i} & u_3^-=\sum_{j\in J^-} u_{3j}\rlap{;}
\end{array}\]
thus $v_2^++v_2^-=0$ and $u_3^++u_3^-=0$. The third part of the sum is
$v_2^+u_3^--v_2^-u_3^+=v_2^+u_3^-+v_2^+u_3^+$, which is negative by (\ref{combinedx:infeasible}), unless $v_2^+=0$. If $v_2^+ = 0$, then $v_{2i}=0$ for all $i$, in which case the sum of (\ref{combinedy:beta}) over all $i$ is $\sum_i v_{3i}=0$, which contradicts (\ref{combinedy:infeasible}).

In a similar way, we can obtain that the fourth part of the sum is also negative. But this contradicts the fact that the sum of the four parts should be nonnegative; therefore the system is unsolvable.
\end{proof}

\subsection{Comments on the proof}

In this section, we present some remarks on the peculiarity of the above proof: It is a non-constructive existence proof, which proceeds by using linear programming duality twice.

To highlight the unusual structure of this proof, we now abstract away the concrete details. In what follows, $a$ corresponds to a configuration of convex sets $(\mathcal A, \mathcal B)$, while $b$ roughly corresponds to the piercing line whose existence is stated in Theorem~\ref{th:Helly}. We will clarify the exact semantics of $b$ after the proof outline.

Rather than constructing a $b$ for every $a$, its existence is non-constructively proven in the following way: Let $c$ be a potential witness to the fact that $b$ does not exist for a given $a$. For a given $c$, we construct a witness $d$ to the fact that $c$ is not in fact a witness for any given $a$. To elaborate:

\begin{itemize}
    \item We would like to prove that $\forall a: \exists b: S_1(a, b)$. ($a$ and $b$ are real vectors; $S_1$ is a bilinear system of inequalities.)
    \item We treat $a$ as a parameter and $b$ as a variable, and apply Farkas' lemma to get equivalent statement $\forall a: \neg \exists c: S_2(a, c)$.
    \item We switch quantifiers: $\forall c: \forall a: \neg S_2(a, c)$.
    \item We now treat $c$ as a parameter and $a$ as a variable, and apply Farkas' lemma again: $\forall c: \exists d: S_3(c, d)$.
    \item The above steps were all equivalences. Hence $\forall a: \exists b: S_1(a, b)$ if and only if $\forall c: \exists d: S_3(c, d)$.
    \item We prove $\forall c: \exists d: S_3(c, d)$ by explicitly constructing a witness $d$ for any $c$.
\end{itemize}\vspace{\baselineskip}

\noindent
The correspondence between the above scheme and the actual notation in the proof of Theorem~\ref{th:Helly} is as follows:
\begin{itemize}
    \item $a$ corresponds to our configuration of sets $(\mathcal A, \mathcal B)$, parametrized by $(x, y, Z)$.
    \item $b$ corresponds to the pair of lines $(l_x, l_y)$. Its exact parametrization appears in the proof only implicitly, but it can be described by $(x_0, a, z_0, \beta)$ in the $x$ direction, $(y_0, a', z'_0, \beta')$ in the $y$ direction, with an extra slack variable $s$ whose sign determines which of the two directions is supposed to be piercing.
    \item $S_1(a, b)$ is the bilinear system of inequalities that states that $b$ pierces $a$.
    \item $c$ is the potential witness of $b$'s non-existence, parametrized by $(U, V)$.
    \item $S_2(a, c)$ is the dual system stating that $c$ is a dual witness for $a$, disproving the existence of a piercing line $b$ for the given $a$.
    \item $d$ is a weighting on the inequalities of the dual system.
    \item $S_3(c, d)$ states that the $d$-weighted sum of the inequalities of the dual system $S_2(a, c)$ leads to a contradiction, which proves that $c$ cannot be a dual witness for any $a$.
\end{itemize}

We have not succeeded in streamlining this seemingly roundabout proof structure, and we now believe that such a streamlining is, in fact, not possible. That is, we formulate the following informal meta-mathematical conjecture: Any proof of Theorem~\ref{th:Helly} is necessarily non-constructive.

Let us note, however, that we \emph{can} construct a piercing line for any given configuration $a$ by solving a linear program. Using Megiddo's algorithm for constant dimensional linear programming~\cite{Meg}, we can construct the piercing line in linear time. In that sense, $b$ can be constructed. Another sense in which $b$ can be constructed is that for a given fixed $n$, the space of possible $a$ configurations can be partitioned into finitely many simplices such that a piecewise linear map assigns a piercing $b$ to any $a$. That is an explicit (gigantic) formula for $b$. Our informal meta-mathematical conjecture states that there is no such explicit formula independent of $n$. Of course, to fully formalize this conjecture, we would have to specify the set of allowed operations on vectors (such as sum, maximum, argmax, argsort, etc.).

\section{Fractional line transversals}\label{sec:frac}

In this section, we maintain the same notation as established at the beginning of Section \ref{sec:main-thm}. For convenience, we also assume that $x_1<\ldots< x_n$ and $y_1<\ldots<y_m$. As before, $Z\in \R^{n \times m}$ and $P_{i,j}=(x_i,y_j,z_{i,j})$; and $A_i=\conv\{P_{i,j}: j\in [m]\}$ and $B_j=\conv\{P_{i,j}: i\in [n]\}$. 

\begin{theorem}\label{th:fracH} With the above notation, either there is a line in the plane $(x_i,\,\cdot\,,\,\cdot\,)$ for some $i\in [n]$ intersecting $\frac m6$ sets out of $B_1,\ldots, B_m$, or there is a line in the plane $(\,\cdot\,,y_j,\,\cdot\,)$ for some $j\in [m]$ intersecting $\frac n6$ sets out of $A_1,\ldots ,A_n$.
\end{theorem}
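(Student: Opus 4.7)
The plan is to bootstrap Theorem~\ref{th:Helly} into a grid-aligned transversal. I would first apply Theorem~\ref{th:Helly} to $(\mathcal A, \mathcal B)$, which produces a line $\ell$ piercing one whole family. By the symmetry of the statement, I may assume $\ell$ lies in some plane $\{y = y_0\}$ and meets every $A_i$. Each $A_i$ projects onto $[y_1, y_m]$ along the $y$-axis, so $y_0 \in [y_1, y_m]$. If $y_0 = y_j$ for some $j$, then $\ell$ already lies in the plane of $B_j$ and pierces all $n \ge n/6$ sets of $\mathcal A$, giving the theorem at once. Hence I may assume $y_{j_0} < y_0 < y_{j_0+1}$ for some $j_0 \in [m-1]$.

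The next step is to reapply Theorem~\ref{th:Helly} to a restricted subconfiguration, in order to force the new transversal closer to a grid plane $\{y = y_j\}$ or $\{x = x_i\}$. Setting $J^L = \{1, \ldots, j_0\}$ and $J^R = \{j_0+1, \ldots, m\}$, I take WLOG $|J^L| \ge m/2$, and form $A_i^L := \conv\{P_{ij} : j \in J^L\}$ and $\mathcal B^L := \{B_j : j \in J^L\}$. The pair $(\{A_i^L\}, \mathcal B^L)$ still pairwise intersects (via the $P_{ij}$'s), so Theorem~\ref{th:Helly} yields a line $\ell'$ that is either in a plane $\{x = x'\}$ piercing all $|J^L| \ge m/2 \ge m/6$ sets of $\mathcal B^L$ (and I am done if $x'$ coincides with some $x_i$), or in a plane $\{y = y'\}$ with $y' \in [y_1, y_{j_0}]$ piercing every $A_i^L \subseteq A_i$ (and I am done if $y'$ coincides with some $y_j$).

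If neither grid coincidence happens, I would iterate the restriction, each round halving either $|J|$ or $|I|$ according to the orientation of the freshly obtained transversal. Because $m/4 \ge m/6$ and $n/4 \ge n/6$, two halvings per axis still preserve the $1/6$-threshold, so I expect the iteration to stop within at most two rounds per axis. The main obstacle will be upgrading ``eventually stops'' to a sharp bound of at most two rounds; I expect this to need a case analysis on the orientations of the iterated transversals (so that an unlucky $\ell_y$ respectively $\ell_x$ in one round strictly shrinks the admissible $y$- respectively $x$-range for the next), combined with a pigeonhole argument preventing too many consecutive unlucky rounds of the same orientation. Extracting the exact constant $1/6$ rather than a weaker bound such as $1/4$ or $1/8$ may require one of the splits to be a ternary split keeping a $2/3$-fraction, reflecting the decomposition $1/6 = \tfrac12 \cdot \tfrac13$.
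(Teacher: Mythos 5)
Your proposal has a genuine gap, and it is exactly at the point you flag as ``the main obstacle'': there is no mechanism that ever forces the transversal onto a grid plane. Theorem~\ref{th:Helly} only asserts the existence of \emph{some} $y_0$ (or $x_0$); for a generic configuration the value $y_0$ will lie strictly between consecutive grid values at every round, so the ``lucky'' coincidence $y'=y_j$ or $x'=x_i$ need never occur. Your iteration therefore does not merely need a sharper bound of two rounds per axis --- it has no termination argument at all. Worse, the quantity you are trying to preserve decays with each unlucky round: after $k$ consecutive $y$-type outcomes you are working with $\mathcal B^{(k)}$ of size about $m/2^k$, so an $x$-type outcome at round $k+1$ pierces only $m/2^k$ of the original $B_j$'s, which drops below $m/6$ as soon as $k\ge 3$; and nothing prevents $k$ from growing until the families degenerate to single sets. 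The two defects reinforce each other: the only way to stop is to land on a grid plane, and the longer you fail to do so, the weaker your conclusion becomes.

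The missing ingredient is positional control of the transversal, and this is precisely what the paper's Lemma~\ref{thm:3x3_grid} supplies: for a $3\times 3$ subgrid one can always find a transversal lying in the \emph{middle} plane $H_x^2$ or $H_y^2$, i.e.\ in a plane actually containing one of the sets. This is proved by a direct geometric argument (the spatial quadrangle $Q$, the common point $R$ of $\ell_A$ and $\ell_B$, and a one-dimensional Borsuk--Ulam-type case check), not by invoking Theorem~\ref{th:Helly}. The fractional statement then follows by double counting $x_i$-good and $y_j$-good triples over all $\binom n3\binom m3$ subgrids to get a density of at least $\tfrac12$ in some fixed plane, and finishing with the fractional Helly theorem for parallel segments (Lemma~\ref{thm:fractional-helly-segments}), which is where the factor $\tfrac12\cdot\tfrac13=\tfrac16$ comes from --- not from a ternary split as you conjecture. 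If you want to salvage your strategy, you would need a version of Theorem~\ref{th:Helly} that constrains the transversal to a prescribed finite set of planes, which is essentially the content of Lemma~\ref{thm:3x3_grid} and is not a consequence of Theorem~\ref{th:Helly} itself.
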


\begin{remark}
    The constant $\frac16$ depends on the bound in the fractional Helly theorem.
    By applying Kalai's better bound for the fractional Helly theorem \cite{Kal}, we may replace $1/6$ by the slightly larger number $1-\sqrt[3]{1/2} \approx 0.206$.
\end{remark}

Unlike Theorem 1, Theorem \ref{th:fracH} only gives a fractional transversal; however, it guarantees that this transversal lies in one of the planes $(x_i,\,\cdot\,,\,\cdot\,)$ or $(\,\cdot\,,y_j,\,\cdot\,)$.

The rest of this section comprises a proof of Theorem \ref{th:fracH}. We will start with the case $n=m=3$ and extend this to Theorem \ref{th:fracH} using the Fractional Helly Theorem. To simplify notation, we let $H_x^i$ denote the plane $(x_i,\,\cdot\,,\,\cdot\,)$ and $H_y^j$ denote the plane $(\,\cdot\,, y_j, \,\cdot\,)$. (So $A_i \subset H_x^i$ and $B_j \subset H_y^j$.)

\begin{lemma}\label{thm:3x3_grid}
    If $n=3$, there is either a line $\ell_x$ in $H_x^2$ intersecting all sets $B_j$ or there is a line $\ell_y$ in $H_y^2$ intersecting all sets $A_i$.
\end{lemma}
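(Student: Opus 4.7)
My approach is to express the existence of each candidate line as a two-dimensional feasibility problem and then rule out joint infeasibility by direct inequality manipulation.

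First I reformulate as an LP. Writing $\ell_x\subset H_x^2$ as $z=ay+b$, the slice $B_j\cap H_x^2$ is the vertical segment at $y=y_j$ whose $z$-endpoints are $z_{2j}$ and $w_j^B := \lambda z_{1j}+(1-\lambda)z_{3j}$, where $\lambda := (x_3-x_2)/(x_3-x_1)$; this is because $B_j$ is the triangle with vertices $P_{1j},P_{2j},P_{3j}$ and $x_2=\lambda x_1+(1-\lambda)x_3$. Hence $\ell_x$ exists iff the system $L_j^B \le a y_j + b \le U_j^B$ for $j\in[3]$ is feasible, with $L_j^B:=\min(z_{2j},w_j^B)$ and $U_j^B:=\max(z_{2j},w_j^B)$. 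Using the linear change of variables $(a,b)\mapsto(z_1^*,z_3^*):=(ay_1+b,\,ay_3+b)$ together with $y_2=\alpha y_1+(1-\alpha)y_3$ for $\alpha:=(y_3-y_2)/(y_3-y_1)$, feasibility reduces to
\[
[\alpha L_1^B+(1-\alpha)L_3^B,\ \alpha U_1^B+(1-\alpha)U_3^B]\cap[L_2^B,U_2^B]\neq\emptyset.
\]
So $\ell_x$ fails iff one of
\[
\text{(I.a)}\ \alpha L_1^B+(1-\alpha)L_3^B>U_2^B,\qquad \text{(I.b)}\ \alpha U_1^B+(1-\alpha)U_3^B<L_2^B
\]
holds, and the symmetric analysis with $w_i^A:=\alpha z_{i1}+(1-\alpha)z_{i3}$ gives analogous alternatives (II.a), (II.b) for $\ell_y$.

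Assuming both lines are absent, I derive a contradiction in each of the four combined cases. The key identity is
\[
W := \alpha w_1^B+(1-\alpha)w_3^B = \lambda w_1^A+(1-\lambda)w_3^A,
\]
both expressions equalling the bilinear interpolation of the four corner heights $z_{11},z_{13},z_{31},z_{33}$ at the centre $(x_2,y_2)$. Using only the monotonicity bounds $L_j^B\le w_j^B$, $L_j^B\le z_{2j}$, $U_2^B\ge w_2^B$, $U_2^B\ge z_{22}$ (and their $A$-counterparts), each combined case collapses to a short inequality chase: (I.a)\,\&\,(II.b) forces $W>z_{22}$ and $W<z_{22}$; (I.a)\,\&\,(II.a) forces $w_2^A>w_2^B$ and $w_2^B>w_2^A$; and the two remaining combinations (I.b)\,\&\,(II.b) and (I.b)\,\&\,(II.a) are images of these under the reflection $z\mapsto-z$, which swaps (I.a)$\leftrightarrow$(I.b) and (II.a)$\leftrightarrow$(II.b).

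The main obstacle is organisational rather than technical: one has to single out the right four derived quantities ($W$, $w_2^A$, $w_2^B$, $z_{22}$) and match each of the four infeasibility combinations to the pair that produces the contradiction. The conceptual content is the observation that the two natural bilinear averages $\alpha w_1^B+(1-\alpha)w_3^B$ and $\lambda w_1^A+(1-\lambda)w_3^A$ happen to coincide, so a single central value is squeezed from above by the $A$-data and from below by the $B$-data, producing an inescapable contradiction.
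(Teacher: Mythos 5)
Your argument is correct and is essentially the paper's proof recast in algebraic form: your four quantities $W$, $w_2^A$, $w_2^B$, $z_{22}$ are the $z$-coordinates of the paper's points $R$, $P_x$, $P_y$, $P_{2,2}$ on the vertical line $L_{2,2}$, and your key identity $\alpha w_1^B+(1-\alpha)w_3^B=\lambda w_1^A+(1-\lambda)w_3^A$ is exactly the paper's observation that $\ell_A$ and $\ell_B$ meet at the common point $R$ (the step where parallelism of the planes is used). The four-case inequality chase is precisely the ``checking a few cases'' that the paper's one-dimensional Borsuk--Ulam remark leaves to the reader, and you carry it out correctly.
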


We note that Holmsen's paper~\cite{Holm} extending the Montejano--Karasev theorem~\cite{KaMo} already implies that one of the two families in Lemma \ref{thm:3x3_grid} can be pierced by a line; however, we also prove something about the \emph{location} of the line, which will be necessary to prove Theorem \ref{th:fracH}. 

 
\begin{proof}
    Let $L_{i,j}$ be the vertical line defined by $(x_i,y_j,\,\cdot\,)$. Let $P_x$ be the intersection of the segment $P_{2,1}P_{2,3}$ with $L_{2,2}$ and let $P_y$ be the intersection of the segment $P_{1,2}P_{3,2}$ with $L_{2,2}$. (So $P_x$ is contained in $A_2$ and $P_y$ is contained in $B_2$.)

    Consider the spatial quadrangle $Q$ with vertices at $P_{i,j}$ for $i,j \in \{1,3\}$. We form $\ell_A$ as the line segment that passes through both points in $Q \cap H_x^2$; we also form $\ell_B$ as the line segment that passes through both points in $Q \cap H_y^2$. (See Figure \ref{fig:quadrangle}.) Both $\ell_A$ and $\ell_B$ intersect $L_{2,2}$; in fact, they intersect at the \emph{same} point, given by a weighted average of the four points $P_{i,j}$ with $i,j \in \{1,3\}$. (This is the place in the argument where we need the three planes in each family to be parallel---these lines would not necessarily intersect otherwise.) Denote this common intersection by $R$.
    

    By convexity, the segment $P_{2,2} P_x$ is contained in $A_2$, while the interval $P_{2,2} P_y$ is contained in $B_2$. Consider the set $\mathcal S_y$ of segments in $H_x^2$ that intersect $B_1$ and $B_3$. The collection $\{ S \cap L_{2,2} : S \in \mathcal S_y\}$ is convex and contains both the points $R$ and $P_x$, so it contains the segment $R P_x$. Similarly, let $\mathcal S_x$ be the set of segments in $H_y^2$ that intersect $A_1$ and $A_3$. The set $\{ S \cap L_{2,2} : S \in \mathcal S_x\}$ is convex and contains the points $P_y$ and $R$, so it contains the segment $R P_y$.
    
    If the segment $R P_x$ intersects the segment $P_{2,2} P_y$, then there is a line contained in $H_x^2$ that pierces $B_1,B_2,B_3$. Similarly, if the segment $R P_y$ intersects the segment $P_{2,2} P_x$, then there is a line contained in $H_y^2$ that pierces $A_1, A_2, A_3$.
    But one of these must occur, as in any topological embedding of $\mathbb S^1$ to $\R$ some two antipodal points are mapped to the same point; so one could say that we are using the $d=1$ case of the Borsuk--Ulam theorem when we map onto $RP_xP_{2,2}P_y$, but of course this one dimensional case can be proved more simply by checking a few cases.
    (See also the proof of Lemma \ref{thm:subset-points} and the remark after it.)
\end{proof}

\begin{figure}[t]
    \centering
    \begin{tikzpicture}[x={(-10:1cm)}, y={(180+30:0.8cm)}, z={(0,1cm)}]
        \foreach \x in {1,2,3}
            \draw[gray] (\x,1) -- (\x,3) (1,\x) -- (3,\x);
        \foreach \x/\y/\z in {1/1/1.5, 3/1/2, 1/3/1.5, 3/3/1.2}{
            \coordinate (\x\y) at (\x,\y,\z) {};
            \draw[dotted] (\x,\y,0) -- (\x,\y,\z);
        }
        \draw (11) -- (13) -- (33) -- (31) -- cycle;
        \draw[blue, thick] ($(11)!0.5!(13)$) coordinate (r1) -- ($(31)!0.5!(33)$) coordinate (r2);
        \draw[red, thick] ($(11)!0.5!(31)$) -- ($(13)!0.5!(33)$);
        \draw[dotted] (2,2,0) -- ($(r1)!0.5!(r2)$) node[above]{\footnotesize $R$\ \null};
    \end{tikzpicture}
    \caption{The quadrangle $Q$, the lines \textcolor{red}{$\ell_A$} and \textcolor{blue}{$\ell_B$}, and their intersection at $R$.}
    \label{fig:quadrangle}
\end{figure}
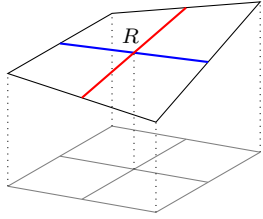

\begin{wrapfigure}{r}{0.25\textwidth}
    \centering
    \begin{tikzpicture}[y={(40:0.3cm)}, x={(-10:0.4cm)}, z={(0,2.2cm)},scale=1.45]
        \tikzmath{\n=0.75;}
        \tikzmath{\offset=0.3;}
        \foreach \x/\y/\z/\w in {1/1/3/1, 1/2/3/2, 1/3/3/5, 1/1/1/3, 2/1/2/4, 3/1/3/5}
            \draw[gray] (\x,\y,0) -- (\z,\w,0); 
        \foreach \x/\y/\z in {1/1/0, 2/1/0, 3/1/0, 1/2/0, 3/2/{\n/4}, 1/3/0, 2/4/{\n/2}, 3/5/\n}%
        {
            \coordinate (\x\y) at (\x,\y,{\offset+\z}) {};
            \draw[dotted,opacity=0.75] (\x,\y,0) -- (\x\y);
        }
        \draw[red, opacity=0.75, thick] (11) -- (13) (31) -- (35);
        \draw[blue, opacity=0.75, thick] (11) -- (31) (13) -- (35);
    \end{tikzpicture}
\end{wrapfigure}

In Lemma \ref{thm:3x3_grid}, it is crucial that both families of planes are parallel. If this is not the case, then the claim may be false, as the following example shows. Define the points
\[
    x_{11} = (1,1,0)\quad
    x_{31} = (3,1,0)\quad
    x_{13} = (1,3,0)\quad
    x_{35} = (3,5,1)\quad
    x_{22} = (2,2,{\textstyle \frac{1}{7}}),
\]
then the points $x_{11}, x_{13}, x_{31}, x_{35}$ form the quadrangle shown at the right. However, for this grid, the line $\ell_B$ lies below $\ell_B$ (as defined in the proof of Lemma \ref{thm:3x3_grid}), while $x_{22}$ lies between the two. Given 
\[
\begin{array}{l @{\qquad} l}
    A_1 = \conv(x_{11}, x_{13}) & B_1 = \conv(x_{11}, x_{31})\\
    A_2 = \conv(\ell_A, x_{22}) & B_2 = \conv(\ell_B, x_{22})\\
    A_3 = \conv(x_{31}, x_{35}) & B_3 = \conv(x_{13}, x_{35}),
\end{array}
\]
then $A_i \cap B_j \neq \emptyset$ for every $i,j \in \{1,2,3\}$ and the sets in $\mathcal A = \{A_1, A_2, A_3\}$ lie in parallel planes, but the sets in $\mathcal B = \{B_1,B_2,B_3\}$ lie in three planes, only two of which are parallel.
But $\ell_A$ is the only line contained in $B_2$'s plane that can pierce $A_1$ and $A_3$, and $\ell_A$ does not pierce $A_2$. Similarly, there is no line in $A_2$'s plane that pierces $\mathcal B$.

We now return to the proof of Theorem \ref{th:fracH}. A theorem of Santal\'o~\cite{San} from 1942 (see also \cite{DGK} and\cite{Bar}) states that: given a finite collection of parallel line segments in the plane such that every triple of these segments has a line transversal, there is a line that intersects all segments. The proof is based on Helly's theorem using the fact that the set of lines intersecting a fixed vertical segment (if parametrized suitably) forms a convex set in the plane. Applying the fractional Helly theorem \cite{KaL} to this situation, instead of Helly's theorem, yields the following lemma.

\begin{lemma}[Fractional Helly for vertical line segments]\label{thm:fractional-helly-segments}
    Suppose that $\mathcal L$ is a collection of parallel line segments in the plane such that at least $\alpha \binom{|\mathcal L|}{3}$ triples of these segments can be stabbed by a line. Then there is a set of $\frac{\alpha}3|\mathcal L|$ segments in $\mathcal L$ that can be stabbed by a single line.
\end{lemma}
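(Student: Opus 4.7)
The plan is to reduce the statement to a direct application of the fractional Helly theorem in $\R^2$, via the line parametrization that underlies Santal\'o's theorem. Assume without loss of generality that every segment in $\mathcal L$ is vertical, and write $\ell\in\mathcal L$ as $\{x_\ell\}\times[a_\ell,b_\ell]$. Parametrize each non-vertical line by its slope-intercept pair $(p,q)$, so that the line is $y=px+q$. The set of non-vertical lines meeting $\ell$ then corresponds to the strip
\[
    K_\ell := \{(p,q)\in\R^2 : a_\ell \le p\,x_\ell + q \le b_\ell\},
\]
which is convex. A triple of segments $\{\ell_1,\ell_2,\ell_3\}\subset\mathcal L$ admits a common transversal if and only if $K_{\ell_1}\cap K_{\ell_2}\cap K_{\ell_3}\ne\emptyset$; purely vertical transversals can be set aside because a vertical line meets only segments sharing its $x$-coordinate, so they can be disposed of either by a tiny perturbation of the configuration or by a trivial pigeonhole argument.

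By hypothesis, an $\alpha$-fraction of the triples of the family $\{K_\ell : \ell\in\mathcal L\}$ of convex sets in $\R^2$ have nonempty common intersection. I would then invoke the fractional Helly theorem of Katchalski and Liu applied in dimension $d=2$, so that the critical tuple size is $d+1=3$. That theorem produces a point $(p_0,q_0)\in\R^2$ lying in at least $\frac{\alpha}{3}|\mathcal L|$ of the strips $K_\ell$; pulling back through the parametrization, the line $y=p_0 x + q_0$ stabs the corresponding $\frac{\alpha}{3}|\mathcal L|$ segments.

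The main (and essentially only) step is the parametrization; once the problem is translated to convex sets in $\R^2$, fractional Helly delivers the conclusion immediately, with no genuine obstacle to overcome. Substituting Kalai's bound for the Katchalski--Liu bound would replace the constant $\alpha/3$ by $1-\sqrt[3]{1-\alpha}$, which is consistent with the remark following Theorem~\ref{th:fracH}.
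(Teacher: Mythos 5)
Your proposal is correct and is essentially the same argument the paper intends: the paper only sketches this lemma, citing the Santal\'o parametrization (lines meeting a fixed vertical segment form a convex strip in the $(p,q)$-plane) followed by the Katchalski--Liu fractional Helly theorem in $\R^2$, which is exactly what you carry out, including the correct constants $\alpha/3$ and $1-\sqrt[3]{1-\alpha}$. Your aside about vertical transversals is the right thing to flag, and in the paper's application it is vacuous since the segments there have pairwise distinct $x$-coordinates, so a vertical line meets at most one of them.
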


We now have all the pieces to finish the proof.

\begin{proof}[Proof of Theorem \ref{th:fracH}]
The points $(x_i,y_j)$ form an $n \times m$ grid in the plane $z=0$, which we will call $G$. A triple $(j_1,j_2,j_3)$ with $1\le j_1<j_2<j_3\le m$ is called \emph{$x_i$-good} for some $i \in [n]$ if there is a line in the plane $(x_i,\,\cdot\,,\,\cdot\,)$ intersecting $B_{j_1},B_{j_2},B_{j_3}$. Analogously, the triple $(i_1,i_2,i_3)$ is \emph{$y_j$-good} if there is a line in the plane $(\,\cdot\,,y_j,\,\cdot\,)$ intersecting $A_{i_1},A_{i_2},A_{i_3}$. Lemma \ref{thm:3x3_grid} says that in every $3 \times 3$ subgrid $\{(x_{i_s},y_{j_t}), s,t=1,2,3\}$ of $G$, either $(j_1,j_2,j_3)$ is $x_{i_2}$-good
 or $(i_1,i_2,i_3)$ is $y_{j_2}$-good. 

Assume next that $\delta$ is the smallest number such that the number of $x_i$-good triples is at most $\delta \binom{m}{3}$ for every $i\in [n]$, and the number of $y_j$-good triples is at most $\delta \binom{m}{3}$ for every $j\in [m]$. We show first that

\begin{equation}\label{eq:Delta}
\delta \ge \frac{1}{2}.
\end{equation} 

The proof is by double counting. Any fixed $x_i$-good triple, say $(j_1,j_2,j_3)$, will appear in exactly $(i-1)(n-i)$ $3 \times 3$ subgrids. This gives at most
\[
\delta\binom{m}{3}\sum_{i=2}^{n-1}(i-1)(n-i)=\delta\binom{m}{3}\binom{n}{3}
\]
$3 \times 3$ subgrids that contain an $x_i$-good triple for some $i$. The same arguments with $x$ and $y$ exchanged gives the same upper bound for the number of $3 \times 3$ subgrids that contain a $y_j$-good triple for some $j$.

As the total number of $3 \times 3$ subgrids is $\binom{n}{3}\binom{m}{3}$, Lemma \ref{thm:3x3_grid} implies that there are at least $\binom{n}{3}\binom{m}{3}$ $3\times 3$ subgrids that are $x_i$- or $y_j$-good for some $i$ or $j$. Thus $\binom{n}{3}\binom{m}{3} \leq 2\delta \binom{n}{3}\binom{m}{3}$, which implies the inequality (\ref{eq:Delta}). 

So there are at least $\frac 12 \binom{m}{3}$ $x_i$-good triples for some $i\in [n]$ or there are at least $\frac 12 \binom{n}{3}$ $y_j$-good triples for some $j\in [m]$. The arguments are symmetric, so assume that the latter case occurs. The plane $H=(\,\cdot\,,y_j,\,\cdot\,)$  intersects the sets $A_1,\ldots,A_n$ in parallel segments, and at least half of the triples of these segment have a line transversal. Lemma~\ref{thm:fractional-helly-segments} implies that there is a line in $H$ intersecting $\frac n6$ of the sets $A_1,\ldots,A_n$. 
\end{proof}

\section{Partial extension to higher dimensions}\label{sec:dim}

How do we extend our results to higher dimensions? Informally, we can imagine the setup of vertical sets as taking an $n\times n$ grid in $\R^2$, and choosing a point in $\R^1$ for each intersection point of the grid. To extend to higher dimensions, we take an $n\times n \times \cdots \times n$ ``base'' grid in $\R^{d}$, and choose a point in $\R^{d-1}$ for each intersection point in the grid. We then form convex sets in $\R^{2d-1}$ by taking the convex hull of those points lying ``above'' a hyperplane in the $d$-dimensional grid and we group each collection of parallel sets into a family. (So we get $d$ families overall.)

More formally, we choose vectors $x^1,\dots,x^d \in \R^n$ with $x^i_1 < x^i_2 < \cdots < x^i_n$ and a point $z_\mathbf{t} \in \R^{d-1}$ for each $\mathbf t = (t_1,\dots,t_d) \in [n]^d$. Then we set $P_{\mathbf t} = (x^1_{t_1}, \dots, x^d_{t_d}, z_\mathbf{t}) \in \R^{2d-1}$, and we form the convex sets
\[
    A^i_j = \conv( P_{\mathbf{t}} : t_i = j )
\]
and the families $\mathcal F^i := \{ A^i_j : 1 \leq j \leq n\}$. 

Why do we choose a point in $\R^{d-1}$ for each intersection point, instead of $\R^1$? If we were to use $\R^1$, then it would be trivial to prove the analogue of Theorem \ref{th:Helly} in higher dimensions: Restricting to a 2-dimensional subgrid of the base space recreates the 3-dimensional scenario of Section \ref{sec:main-thm}. To avoid a trivial reduction like this, we need $d-1$ additional dimensions.

There is an extrinsic reason for this choice of dimension, as well: According to Holmsen's extension \cite{Holm} of the Montejano--Karasev theorem~\cite{KaMo}, if $n=3$, then one of the families has a line transversal. The goal of this section is to prove a strengthened version of this statement in our setting---the analogue of Lemma \ref{thm:3x3_grid} for higher dimensions.

\begin{proposition}
    Set $n=3$ in the setup above. For some $i$, there is a line whose first $d$ coordinates are $(x^1_2,x^2_2,\dots,x^{i-1}_2,\,\cdot\,,x^{i+1}_2,\dots,x^d_2)$ that pierces $\mathcal F^i = \{A^i_1,A^i_2,A^i_3\}$.\footnote{In other words, the line lies ``above'' one of the central lines in the base $3\times 3 \times \cdots \times 3$ grid.}
\end{proposition}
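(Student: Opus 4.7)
The plan is to mimic the structure of the proof of Lemma~\ref{thm:3x3_grid}, constructing higher-dimensional analogues of the hub $R$, the direction points $P_x, P_y$, the central corner $P_{2,2}$, and the feasibility sets of lines piercing the extremal sets in each family. The principal obstacle will be a final topological step that generalizes the $d = 1$ Borsuk--Ulam (intermediate value theorem) argument at the end of Lemma~\ref{thm:3x3_grid}.

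For $k \in [d]$ set $s_k = (x^k_2 - x^k_1)/(x^k_3 - x^k_1)$ and $\lambda^k_1 = 1 - s_k$, $\lambda^k_3 = s_k$, so that $\lambda^k_1 x^k_1 + \lambda^k_3 x^k_3 = x^k_2$. Define the hub $R = \sum_{\mathbf{a} \in \{1,3\}^d} \bigl(\prod_{k} \lambda^k_{a_k}\bigr) P_{\mathbf{a}}$; one checks directly that the first $d$ coordinates of $R$ equal $(x^1_2, \ldots, x^d_2)$, so $R$ lies in the affine $(d-1)$-plane $H \subset \R^{2d-1}$ on which the first $d$ coordinates take these central values. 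For each direction $i$ and each $j \in \{1,3\}$ define the ``corner half-sum'' $R^i_j = \sum \prod_{k \neq i} \lambda^k_{a_k}\, P_{\mathbf{a}} \in A^i_j$, summed over $\mathbf{a}$ with $a_i = j$ and $a_k \in \{1,3\}$ for $k \neq i$. Its first $d$ coordinates are exactly the ones required by the proposition with $i$-th coordinate $x^i_j$, so the line $\ell^i_0$ through $R^i_1, R^i_3$ has the desired form, pierces $A^i_1$ and $A^i_3$, and passes through $R = (1-s_i) R^i_1 + s_i R^i_3$ in the slice $H$. If $R \in A^i_2$ for some $i$, then $\ell^i_0$ pierces $\mathcal F^i$ and we are done.

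In general, the set of lines of the required form piercing $A^i_1$ and $A^i_3$, parametrized by their intersection with $H$, is a convex set $D^i := (1-s_i) C^i_1 + s_i C^i_3 \subseteq H \cong \R^{d-1}$, where $C^i_j := A^i_j \cap H^i_j$ and $H^i_j$ is the translate of $H$ with $i$-th coordinate $x^i_j$. Such a line pierces $A^i_2$ iff its $H$-point lies in $C^i_2 = A^i_2 \cap H$, so the proposition for direction $i$ is equivalent to $D^i \cap C^i_2 \neq \emptyset$. Next introduce the ``direction centers'' $P^i := \sum \prod_{k \neq i} \lambda^k_{a_k}\, P_{\mathbf{a}}$, summed over $\mathbf{a}$ with $a_i = 2$ and $a_k \in \{1,3\}$ for $k \neq i$; then $P^i \in C^i_2$, and $P_{\mathbf{2}} \in C^i_2$ for every $i$. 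By splitting the sum defining $P^{i'}$ (for $i' \neq i$) according to $a_i \in \{1,3\}$ and applying the same weight identities, one verifies that $P^{i'} \in D^i$. Consequently $\conv(\{R\} \cup \{P^{i'} : i' \neq i\}) \subseteq D^i$ and $\conv(P_{\mathbf{2}}, P^i) \subseteq C^i_2$ inside $\R^{d-1}$.

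The hard step is to conclude that for some $i$ these two sets meet. Abstractly this becomes a statement about the $d + 2$ marked points $R, P_{\mathbf{2}}, P^1, \ldots, P^d \in \R^{d-1}$: among the $d$ specific partitions $\bigl(\{R\} \cup \{P^{i'} : i' \neq i\},\, \{P_{\mathbf{2}}, P^i\}\bigr)$, at least one must have intersecting convex hulls. For $d = 2$ this is exactly the observation used at the end of Lemma~\ref{thm:3x3_grid}---that among the three pairings of four collinear points into pairs, at least two yield overlapping segments (the $d = 1$ case of Borsuk--Ulam). For general $d$ I would invoke a Borsuk--Ulam / topological Radon / KKM-type result, for instance by applying Borsuk--Ulam to a continuous map $S^{d-1} \to \R^{d-1}$ built from the configuration, where the sphere is realised as the boundary of the cross-polytope $\beta_d$ whose $2^d$ antipodal facet pairs correspond naturally to the corners $P_{\mathbf{a}}$ with $\mathbf{a} \in \{1,3\}^d$. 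The principal technical obstacle is ensuring that the resulting topological coincidence really selects one of our $d$ prescribed partitions, rather than some arbitrary Radon partition of the $d + 2$ points; this is precisely the step where the special product structure of the weights $\prod \lambda^k_{a_k}$ should be exploited.
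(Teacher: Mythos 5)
Your first two paragraphs correctly reproduce the paper's construction: your hub $R$ is the paper's $Q_{[d]}$, your $P^i$ is $Q_{[d]\setminus\{i\}}$, your $P_{\mathbf 2}$ is $Q_\emptyset$, and the reformulation ``find $i$ with $D^i\cap C^i_2\neq\emptyset$'' is exactly the right target. The gap is in the final step, and it is not merely the technical obstacle you flag: the combinatorial statement you reduce to is false. Already for $d=3$ one can choose five points $R,P_{\mathbf 2},P^1,P^2,P^3\in\R^2$ so that all three of your prescribed partitions have disjoint convex hulls --- for instance $R=(10,10)$, $P_{\mathbf 2}=(-5,-5)$, $P^1=(-1,20)$, $P^2=(20,-1)$, $P^3=(1,1)$, where the lines $x=0$, $y=0$ and $x+y=10$ realise the three separations. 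Moreover, since $R$, $P_{\mathbf 2}$ and the $P^i$ are convex combinations of pairwise disjoint sets of the free parameters $z_{\mathbf t}$ (those $\mathbf t$ with no entry equal to $2$, the all-$2$ index, and those with a single $2$ in position $i$, respectively), these $d+2$ points can be made completely arbitrary in $\R^{d-1}$ by choosing the $z_{\mathbf t}$; so no ``special product structure of the weights'' can rescue the reduced claim, and no topological coincidence theorem will select one of your $d$ partitions.

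The fix is to keep more witnesses. For every subset $J\subseteq[d]$ define $Q_J$ as the $\prod_{k\in J}\lambda^k_{a_k}$-weighted combination of the points $P_{\mathbf t}$ with $t_k\in\{1,3\}$ for $k\in J$ and $t_k=2$ for $k\notin J$, so that $Q_{[d]}=R$, $Q_\emptyset=P_{\mathbf 2}$ and $Q_{[d]\setminus\{i\}}=P^i$. The computations you already carried out show that $Q_J\in D^i$ whenever $i\in J$ and $Q_J\in C^i_2$ whenever $i\notin J$, and all $Q_J$ lie in the common slice $H\cong\R^{d-1}$. Now no topology is needed: if for every $i$ the hulls $\conv(Q_J: i\in J)$ and $\conv(Q_J: i\notin J)$ were disjoint, the $d$ separating hyperplanes would place all $2^d$ points $Q_J$ in distinct cells of a $d$-hyperplane arrangement in $\R^{d-1}$, which has at most $\sum_{i=0}^{d-1}\binom{d}{i}=2^d-1$ cells --- a contradiction. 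This is exactly the paper's Lemma~\ref{thm:subset-points}, and with it your argument closes along the lines you set up.
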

\begin{proof}
    Let
    \[
        B^i = \big\{ (x^1_2,x^2_2,\dots,x^{i-1}_2, y ,x^{i+1}_2,\dots,x^d_2) : y \in \R\} \times \R^{d-1}.
    \]
    We are looking for an $i$ such that $B^i$ contains a line that pierces $\mathcal F^i$.
    Say that $x^i_2 = \alpha^i_1 x^i_1 + \alpha^i_3x^i_3$ where $\alpha^i_1 + \alpha^i_3 = 1$ and $\alpha^i_1,\alpha^i_3 \geq 0$. (The values $\alpha^i_r$ are determined uniquely.) Given $J \subseteq [d]$ and $\mathbf r \in \{1,3\}^J$,
    define $\mathbf{t}_{J,\mathbf r} \in [n]^d$ by 
    \[
        (\mathbf{t}_{J,\mathbf{r}})_i = \begin{cases}
            2 &\text{if } i \notin J\\
            \mathbf r_i &\text{if } i \in J
        \end{cases}
    \]
    so for example $\mathbf{t}_{[d],\mathbf r} = \mathbf r$ and $\mathbf{t}_{\emptyset,\mathbf r}=\mathbf 2$.
    Also let $P_{J,\mathbf{r}} := P_{\mathbf{t}_{J,\mathbf{r}}}$.
    Now, for each $J \subseteq [d]$, define the point
    \[
        Q_J = \sum_{\mathbf{r} \in \{1,3\}^J} \Big( \prod_{j \in J} \alpha^j_{\mathbf{r}_j} \Big) P_{J, \mathbf{r}}.
    \]
    Since $\sum_{\mathbf{r} \in \{1,3\}^J} \Big( \prod_{j \in J} \alpha^j_{\mathbf{r}_j} \Big) = \prod_{j \in J} (\alpha^j_1 + \alpha^j_3) = 1$, the point $Q_J$ is a convex combination of the points $P_{J,\mathbf{r}}$.
    
    The coefficients are chosen in this convex combination so that the first $d$ coordinates of $Q_J$ are $(x^1_2,x^2_2,\dots,x^d_2)$. Moreover, we have two properties of $Q_J$: If $i \notin J$, then $P_{J,\mathbf{r}} \in A^i_2$ for every $\mathbf r \in \{1,3\}^J$, so $Q_J \in A^i_2$. On the other hand, if $i \in J$, then
    \[
        Q_J =
        \alpha^i_1 \sum_{\substack{\mathbf{r} \in \{1,3\}^{J} \\ \mathbf{r}_i = 1}} \Big( \prod_{j \in J\setminus \{i\}} \! \alpha^i_{\mathbf{r}_j} \Big) P_{J, \mathbf{r}} + 
        \alpha^i_3 \sum_{\substack{\mathbf{r} \in \{1,3\}^{J} \\ \mathbf{r}_i = 3}} \Big( \prod_{j \in J\setminus \{i\}} \! \alpha^i_{\mathbf{r}_j} \Big) P_{J, \mathbf{r}},
    \]
    which is a convex combination of a point in $B^i \cap A^i_1$ and a point in $B^i \cap A^i_3$.

    If there is an $i$ such that
    \[
        \conv( Q_J : i \in J) \cap \conv(Q_J : i \notin J) \neq \emptyset,
    \]
    then we are done. To see why, let $Q$ denote the point in the intersection. On the one hand, because $Q \in \conv(Q_J : i \notin J)$, we know that $Q \in A^i_2$. On the other hand, because $Q \in \conv(Q_J : i \in J)$, $Q$ is contained in a line that pierces $A^i_1$ and $A^i_3$ but is itself contained in $B^i$.

    Since the first $d$ coordinates of $Q_J$ are the same for every $J$, the $Q_J$'s are contained in a $(d-1)$-dimensional affine subspace. Thus, the following Lemma \ref{thm:subset-points} proves that such an $i$ always exists.
\end{proof}

\begin{lemma}\label{thm:subset-points}
    If $Q_J \in \R^{d-1}$ for each $J \subseteq [d]$, there is an index $i$ such that
    \[
        \conv( Q_J : i \in J ) \cap \conv(Q_J : i \notin J) \neq \emptyset.
    \]
\end{lemma}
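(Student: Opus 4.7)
The natural framework is to apply the Borsuk--Ulam theorem to a multilinear interpolation of the $Q_J$'s on the boundary of the cube $C = [-1,1]^d$. I would identify each subset $J \subseteq [d]$ with the vertex $\epsilon(J) \in \{-1,+1\}^d$ of $C$ given by $\epsilon(J)_i = +1$ if $i \in J$ and $-1$ otherwise. Under this identification, the antipodal map $x \mapsto -x$ on $\partial C$ sends $\epsilon(J)$ to $\epsilon([d] \setminus J)$, and the two opposite facets $\{x_i = +1\}$ and $\{x_i = -1\}$ are spanned by the vertices corresponding to $\{J : i \in J\}$ and $\{J : i \notin J\}$ respectively.

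Next, define $F : C \to \R^{d-1}$ by multilinear interpolation,
\[
    F(x) = \sum_{J \subseteq [d]} c_J(x)\, Q_J, \qquad c_J(x) = \prod_{i \in J} \tfrac{1+x_i}{2} \prod_{i \notin J} \tfrac{1-x_i}{2}.
\]
The weights $c_J(x)$ are nonnegative on $C$ and sum to $1$, so $F(x)$ is always a convex combination of the $Q_J$'s. The crucial property is compatibility with the face structure: if $x_i = +1$ then the factor $\tfrac{1-x_i}{2}$ vanishes, so $c_J(x) = 0$ for every $J$ with $i \notin J$, giving $F(x) \in \conv(Q_J : i \in J)$; symmetrically, if $x_i = -1$ then $F(x) \in \conv(Q_J : i \notin J)$.

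The boundary $\partial C$ is homeomorphic to $\mathbb{S}^{d-1}$ by an antipode-preserving homeomorphism, so $F|_{\partial C}$ can be viewed as a continuous map $\mathbb{S}^{d-1} \to \R^{d-1}$. The Borsuk--Ulam theorem then produces $x \in \partial C$ with $F(x) = F(-x)$. Any boundary point has some coordinate equal to $\pm 1$; after replacing $x$ by $-x$ if necessary, I pick an index $i$ with $x_i = +1$, so $(-x)_i = -1$. The observation in the previous paragraph gives $F(x) \in \conv(Q_J : i \in J)$ and $F(-x) \in \conv(Q_J : i \notin J)$, and their common value $F(x) = F(-x)$ exhibits the required nonempty intersection.

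The only delicate step is the choice of interpolation: the multilinear weights are the unique family that simultaneously respects the face structure of the cube (so each facet maps into the convex hull of its own vertex values) and is compatible with the antipodal action on $\partial C$. Once this is set up, the argument is a routine instance of the $(d-1)$-dimensional Borsuk--Ulam theorem, exactly the higher-dimensional analogue of the one-dimensional case (an embedding $\mathbb{S}^1 \to \R$ has antipodal points with equal image) that was already invoked at the end of the proof of Lemma \ref{thm:3x3_grid}.
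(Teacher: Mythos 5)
Your proof is correct. It is not, however, the route the paper takes for its actual proof of Lemma \ref{thm:subset-points}: the paper argues by contradiction that if no such $i$ existed, each $i\in[d]$ would admit a hyperplane in $\R^{d-1}$ separating $\{Q_J : i\in J\}$ from $\{Q_J : i\notin J\}$, forcing the $2^d$ points $Q_J$ into distinct cells of an arrangement of $d$ hyperplanes in $\R^{d-1}$ — but such an arrangement has at most $\sum_{i=0}^{d-1}\binom{d}{i}=2^d-1$ cells. Your Borsuk--Ulam argument is essentially the alternative that the paper only sketches in the remark following the lemma; you supply the detail the remark omits, namely the explicit multilinear extension $F(x)=\sum_J c_J(x)Q_J$ with weights $c_J(x)=\prod_{i\in J}\frac{1+x_i}{2}\prod_{i\notin J}\frac{1-x_i}{2}$, together with the verification that each facet $\{x_i=\pm1\}$ maps into the appropriate convex hull and that the identification $\partial C\cong\mathbb S^{d-1}$ is antipode-preserving. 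The trade-off: the paper's counting proof is shorter and entirely elementary, while yours invokes Borsuk--Ulam but makes transparent the connection to the one-dimensional antipodal argument used at the end of Lemma \ref{thm:3x3_grid}. One cosmetic point: your parenthetical claim that the multilinear weights are the \emph{unique} interpolation compatible with the face structure and the antipodal action is unnecessary for the argument and would need justification if you wanted to keep it; existence of one such extension is all that is used.
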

\begin{proof}
    Suppose the conclusion is false. Then for each $i \in [d]$, there is a hyperplane $H_x$ that separates the sets $\{ Q_J : i \in J\}$ and $\{ Q_J : i \notin J\}$. These hyperplanes divide $\R^{d-1}$ into cells, and each point $Q_J$ must lie in a separate cell. However, we know that $d$ hyperplanes divide $\R^{d-1}$ into at most
   \[
        \sum_{i=0}^{d-1} \binom{d}{i} = 2^d - 1
    \]
    cells, which is a contradiction.
\end{proof}

\begin{remark}
    We can alternatively prove Lemma \ref{thm:subset-points} via topology. The function $f\colon J \mapsto Q_J$ can be considered instead as a function $f\colon \{-1,1\}^d \to \R^{d-1}$. We can extend $f$ to a function $\tilde f$ on the boundary of the unit cube in which $\tilde f(F) = \conv\big(f(F)\big)$ for every facet $F$. By the Borsuk--Ulam theorem, there is a pair of antipodal points $u,-u \in \partial [-1,1]^d$ such that $\tilde f(u) = \tilde f(-u)$; the facets that $u$ and $-u$ belong to correspond exactly to a partition of vertices as described in Lemma \ref{thm:subset-points}.
\end{remark}

\bigskip
{\bf Acknowledgements.}
This research started during the special semester on Discrete Geometry and Convexity at the Erdős Center, Budapest, in 2023, supported by ERC Advanced Grants ``GeoScape" and ``ERMiD", no. 882971 and 101054936.
IB was partially supported by NKFIH grants No.\ 131529, 132696, and 133919 and also by the HUN-REN Research Network.
TD was supported by a National Science Foundation Graduate Research Fellowship under Grant No.\ 2141064.
DP was supported by the ERC Advanced Grant ``ERMiD'' and by the J\'anos Bolyai Research Scholarship of the Hungarian Academy of Sciences, and by the New National Excellence Program \'UNKP-23-5 and by the Thematic Excellence Program TKP2021-NKTA-62 of the National Research, Development and Innovation Office.
DV was supported by the Ministry of Innovation and Technology NRDI Office within the framework of the Artificial Intelligence National Laboratory (RRF-2.3.1-21-2022-00004).

\end{document}